\documentclass[11pt]{amsproc}
 \usepackage[margin=1in]{geometry}
\usepackage{setspace,fullpage}
\usepackage{graphicx}
\usepackage[nice]{nicefrac}
\usepackage{amssymb}
\usepackage{multirow}
\usepackage{array}
\usepackage{enumitem}

\usepackage{graphicx} % Required for \scalebox

\usepackage{amsmath,amsthm,amscd,amssymb, mathrsfs}

\usepackage{latexsym}

\numberwithin{equation}{section}

\theoremstyle{plain}
\newtheorem{theorem}{Theorem}[section]
\newtheorem{lemma}[theorem]{Lemma}

\newtheorem{proposition}[theorem]{Proposition}

\theoremstyle{definition}

\theoremstyle{remark}

\title{An improved non-linear Roth-type theorem in finite fields}
\author{Mark Lewko}

\address{Lebanon, New Hampshire USA}
\email{mlewko@gmail.com}
\begin{document}

\begin{abstract}
Let \(F\) be a finite field of odd characteristic. We prove that any set \(A\subset F\) with $|A|\geq C|F|^{5/6}$ contains a nontrivial quadratic progression $(x, x+y, x+y^2), y\neq 0.$
For prime fields, this improves the previous best-known exponent of \(7/8\), due to Kavrut and Wu. Unlike some of the previous papers, which rely on Katz's deep multivariate exponential-sum estimates, our argument uses only one-variable Weil-type estimates. We also construct, over certain non-prime finite fields, progression-free sets of size \(c|F|^{2/3}\). A key idea in the proof was suggested to the author by ChatGPT 5.5.
\end{abstract}

\maketitle

\section{Introduction}
We will use $F_q$ to denote a sufficiently large finite field of odd characteristic such that $|F_q|=q=p^s$ for some prime $p$. We will give a proof of the following theorem:

\begin{theorem}\label{thm:main}
Let $A \subset F_q$. There exists a constant $C$ such that for all $|A| \geq C q^{\frac{5}{6}}$, $A$ contains a quadratic progression of the form $(x,x+y,x+y^2)$ with $y\neq 0$.
\end{theorem}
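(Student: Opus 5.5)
We count progressions with Fourier analysis on $F_q$, and reduce the error term to one-variable Weil sums. Let $\psi$ be a nontrivial additive character of $F_q$, set $e_\xi(t)=\psi(\xi t)$, and write $\delta=|A|/q$. Consider
\[
N=\sum_{x,y\in F_q} 1_A(x)1_A(x+y)1_A(x+y^2).
\]
The terms with $y=0$ contribute exactly $|A|$. So it suffices to show $N>|A|$.

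\textbf{Fourier expansion.} Expand $1_A(x+y)$ and $1_A(x+y^2)$ in Fourier series, $1_A(u)=\frac1q\sum_\xi \hat 1_A(\xi)e_\xi(u)$, where $\hat 1_A(\xi)=\sum_{u\in A}\psi(-\xi u)$. Summing over $x$ gives
\[
N=\frac{1}{q^2}\sum_{\xi,\eta}\hat 1_A(\xi)\hat 1_A(\eta)\,\overline{\hat 1_A(\xi+\eta)}\;\sum_y \psi(\xi y+\eta y^2)
\]
(up to sign conventions). The term $\xi=\eta=0$ gives the main term $|A|^3/q$. The terms with $\eta=0,\ \xi\neq 0$ vanish, because then $\sum_y\psi(\xi y)=0$. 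For $\eta\ne0$ the inner sum is a quadratic Gauss sum of modulus $\sqrt q$, since $p$ is odd.

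\textbf{Naive bound.} Bounding $|\hat 1_A(\xi+\eta)|\le|A|$ and applying Cauchy--Schwarz with Parseval gives an error of $q^{-2}\cdot|A|\cdot q|A|\cdot\sqrt q=|A|^2q^{-1/2}$. This only beats $|A|^3/q$ when $|A|\gg q^{1/2}$, but the bound is illegitimate as stated, because summing over both $\xi$ and $\eta$ loses more. Done honestly, this route gives roughly the known $7/8$-type exponents. To reach $5/6$, we will instead use Cauchy--Schwarz in the spatial variables.

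\textbf{The improved plan.} Write $N=\sum_x 1_A(x)\,S(x)$ with $S(x)=\sum_y 1_A(x+y)1_A(x+y^2)$. Replace one copy of $1_A$ by its balanced function $f=1_A-\delta$. This leaves a main term $\delta|A|^2$ (using that $y\mapsto x+y^2$ has the right fibre counts on average) and error terms of the form
\[
E=\sum_{x,y}1_A(x)1_A(x+y)f(x+y^2).
\]
Apply Cauchy--Schwarz in $(x,y)$, with the substitution $z=x+y$, to remove $1_A(x)$ and $1_A(x+y)$. This gives
\[
|E|^2\le |A|^2\sum_{z,w}\Big|\sum_{y} f(z-y+y^2)\,\overline{f(w-y+y^2)}\Big|,
\]
or a variant in which the quadratic structure is preserved. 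Expanding $f$ in Fourier series, the inner sums become $\sum_y\psi(\lambda(y^2-y))$, which are again one-variable Gauss/Weil sums of size $\sqrt q$. The remaining sums over frequencies are then controlled by Parseval, $\sum_\xi|\hat f(\xi)|^2\le q|A|$, together with the trivial bound $\|\hat f\|_\infty\le|A|$.

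If the bookkeeping works, the target is $|E|\lesssim |A|^{3/2}q^{1/4}$ or a comparable bound. With the right weighting between $|A|$ and $q$ this should give $\delta|A|^2 \gg |E|$ exactly when $|A|\gg q^{5/6}$; for instance, $|E|\lesssim q^{1/2}|A|^{3/2}\cdot(\cdot)$ balanced against $|A|^3/q$ gives $|A|^{3/2}\gg q^{5/4}$. I expect the exponent to fall out of one application of Cauchy--Schwarz combined with a Weil bound on a sum like $\sum_y\psi(\xi y+\eta y^2+\zeta (y+t)^2)$, i.e. a one-variable quadratic-polynomial character sum that is $\le\sqrt q$ unless its degenerate coefficients vanish.

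\textbf{Main obstacle.} The hardest step is choosing the Cauchy--Schwarz so that, after squaring, the only character sums that appear are in a single variable, and so that the degenerate cases, where the leading coefficient vanishes and the sum has size $q$, form a set of frequencies small enough to be absorbed using Parseval. I would first try doubling the $y$ variable: $y\mapsto(y,y')$ with $x+y=x'+y'$ fixed. The phase then becomes $\eta(y^2-y'^2)$ with a linear constraint, and the degenerate locus is $\eta=0$ or $y=\pm y'$. These contribute $O(q|A|^2)$-type terms, which still beat the main term when $|A|\ge Cq^{5/6}$, and the constant $C$ is what absorbs them.
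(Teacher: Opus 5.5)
Your setup is sound. Writing $1_A=|A|/q+f$ in the third slot gives $N=|A|^3/q+E$ exactly, and your target $|E|\lesssim q^{1/4}|A|^{3/2}$ is precisely what $|A|\ge Cq^{5/6}$ requires; this is the reduction of Lemma~\ref{lem:reduce} with saving $q^{-1/4}$. But you never prove that bound, and it is the entire content of the theorem. One Cauchy--Schwarz in $x$ gives $|E|^2\le |A|\sum_x\bigl|\sum_y 1_A(x+y)f(x+y^2)\bigr|^2$. So you need $\sum_x\bigl|\sum_y 1_A(x+y)f(x+y^2)\bigr|^2\lesssim q^{1/2}|A|^2$, which is exactly $\|\mathcal{A}(1_A,f)\|_2\lesssim q^{-1/4}\|1_A\|_2\|f\|_2$, i.e.\ Theorem~\ref{thm:bilinear} itself. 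Your displayed inequality with $\sum_{z,w}$ is not what Cauchy--Schwarz produces; removing both $1_A(x)$ and $1_A(x+y)$ leaves only the trivial bound. The mechanism you propose also gives no saving. The $y$-sums are quadratic Gauss sums, which merely evaluate $K(a,b)$ as $q^{-1/2}$ times a unimodular phase. After that, Cauchy--Schwarz in $n$ plus Parseval bounds $\sum_m\bigl|\sum_{n\ne0}\widehat{1_A}(m-n)\widehat f(n)K(m-n,n)\bigr|^2$ by exactly $\|\widehat{1_A}\|_{\ell^2}^2\|\widehat f\|_{\ell^2}^2$, the trivial bound, and $\|\widehat f\|_\infty$ bounds do not help. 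The degenerate loci you worry about are harmless. The difficulty is the nondegenerate part, where you must exploit oscillation of $-\frac{u^2}{4v}+\frac{(u-h)^2}{4(v+h)}$ jointly in two frequency variables.

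The missing idea is how to obtain that joint cancellation optimally. After expanding the square and fixing $h\neq0$, one needs $\|T_h\|\lesssim q^{-1/2}$ for the kernel $K(u,v)\overline{K(u-h,v+h)}$. Its entries have size $q^{-1}$, so this is full square-root cancellation for a $q\times q$ matrix. The paper notes that estimating $T_h$ by repeated $TT^*$ only recovered the $7/8$ exponent. Instead, it does one $TT^*$.
\begin{itemize}
\item The $x$-sum is again a Gauss sum, giving $B_h(y,z)$ of size $q^{1/2}$ with phase $e\bigl(h(z-y)/(h+y+z)\bigr)$.
\item Shifting $Y=y+\frac h2$, $Z=z+\frac h2$ and absorbing the signs $\chi(Y^2-h^2/4)$ turns the kernel into $q\,1_{Y=Z}+q^{1/2}L_h(Z/Y)$, a multiplicative convolution.
\item Diagonalizing with multiplicative characters reduces everything to $\sup_\eta\bigl|\sum_r L_h(r)\eta(r)\bigr|\lesssim q^{1/2}$. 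This is a one-variable mixed Weil sum whose bound is uniform in $\eta$, because it depends only on the number of zeros and poles (three, after $s=\frac{r-1}{r+1}$), not on their orders.
\end{itemize}
Without this step, or some substitute giving the sharp operator bound, your argument stops at the trivial estimate.
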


In fields of prime order, a quantitative result of this form was first proven for sets of size $|A| \geq c q^{\frac{14}{15}}$ by Bourgain and Chang \cite{BC} in 2016. This was improved to $|A| \geq c q^{\frac{11}{12}}$ by Dong, Li and Sawin \cite{DLS} and more recently to $|A| \geq c q^{\frac{7}{8}}$ by Kavrut and Wu \cite{KW}. The latter two papers rely on deep multivariate exponential-sum estimates of Katz, ultimately derived from Deligne's proof of the Riemann Hypothesis for varieties. Our argument avoids these higher-dimensional estimates and uses only a one-dimensional Weil-type estimate. We also note that Peluse obtained polynomial savings for longer polynomial configurations in a remarkable paper \cite{S}.

As with all prior work on this problem, we proceed by proving an estimate on a related bilinear averaging operator. In order to state this estimate we must introduce some notation. Let $e(\cdot): F_{q} \rightarrow \mathbb{C}$ denote a non-trivial additive character, and let $f: F_q \rightarrow \mathbb{C}$ denote a complex-valued function on $F_q$. We define
$$
\mathbb{E}[f] = \frac{1}{q} \sum_{x \in F_q} f(x)
$$
and the corresponding norms
$$
\|f\|_{r} = \left( \frac{1}{q} \sum_{x \in F_q}|f(x)|^r \right)^{1/r}.
$$
We will also use
$$
\|g\|_{\ell^2}=\left(\sum_{\xi\in F_q}|g(\xi)|^2\right)^{1/2}.
$$
We define the Fourier transform of $f$ by
$$
\widehat f(\xi):=\mathbb E_x f(x)e(-x\xi)
=\frac1q\sum_{x\in F_q}f(x)e(-x\xi).
$$
The Fourier inversion formula then takes the form
$$
f(x)=\sum_{\xi\in F_q}\widehat f(\xi)e(x\xi).
$$
With this convention, Parseval's identity states
$$
\|f\|_2=\|\widehat f\|_{\ell^2}.
$$
We also use the usual asymptotic notation $X\lesssim Y$ to indicate that there exists a universal constant $C$ such that $X \leq C Y$.

For $f_1,f_2 : F_{q} \rightarrow \mathbb{C} $ we consider the following operator
$$
\mathcal{A}(f_1,f_2)(x) := \frac{1}{q} \sum_{y \in F_q} f_1(x+y)f_2(x+y^2).
$$
We will prove:

\begin{theorem}\label{thm:bilinear}
Let $f_1,f_2 : F_{q} \rightarrow \mathbb{C}$. Then
$$
\|\mathcal{A}(f_1,f_2) - \mathbb{E}[f_1]\mathbb{E}[f_2]\|_{2} \lesssim q^{-\delta} \|f_1\|_{2} \|f_2\|_{2}
$$
holds for $\delta = \frac{1}{4}$.
\end{theorem}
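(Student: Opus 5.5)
The plan is to pass to the Fourier side, where the Gauss sum in $y$ turns $\mathcal{A}(f_1,f_2)-\mathbb{E}[f_1]\mathbb{E}[f_2]$ into a twisted convolution with a square-root–small kernel, and then to prove the remaining trilinear bound with Cauchy--Schwarz and one-variable Weil estimates.

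\emph{Fourier expansion.} Inserting $f_i(z)=\sum_\xi \widehat f_i(\xi)e(z\xi)$ gives
$$\mathcal{A}(f_1,f_2)(x)=\sum_{\xi,\eta}\widehat f_1(\xi)\widehat f_2(\eta)\,e(x(\xi+\eta))\,\frac1q\sum_{y}e(\xi y+\eta y^2).$$
When $\eta=0$ the inner sum is $\mathbf 1_{\xi=0}$. Hence the $\eta=0$ terms contribute exactly $\mathbb{E}[f_1]\mathbb{E}[f_2]$, since $\widehat f_i(0)=\mathbb{E}[f_i]$. When $\eta\neq0$, completing the square (odd characteristic) gives
$$\frac1q\sum_y e(\xi y+\eta y^2)=q^{-1/2}\,\varepsilon\,\chi(\eta)\,e\!\left(-\tfrac{\xi^2}{4\eta}\right),$$
where $\chi$ is the quadratic character and $|\varepsilon|=1$. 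By Parseval the theorem therefore reduces to a bound on the twisted convolution
$$S(\zeta)=\sum_{\eta\neq 0}\widehat f_2(\eta)\,\widehat f_1(\zeta-\eta)\,\chi(\eta)\,e\!\left(-\tfrac{(\zeta-\eta)^2}{4\eta}\right).$$
We need $\|S\|_{\ell^2}\lesssim q^{1/4}\|\widehat f_1\|_{\ell^2}\|\widehat f_2\|_{\ell^2}$. Equivalently, with $a=\widehat f_2$, $b=\widehat f_1$ and an arbitrary $c\in\ell^2$, the trilinear form
$$\Lambda=\sum_{\eta\ne0,\,u}a_\eta\, b_u\, \overline{c_{\eta+u}}\,\chi(\eta)\,e\!\left(-\tfrac{u^2}{4\eta}\right)$$
should satisfy $|\Lambda|\lesssim q^{1/4}\|a\|\|b\|\|c\|$. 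The trivial bound (Cauchy--Schwarz in $\eta$, then Young) gives $q^{1/2}$, which corresponds to $\delta=0$. We must gain a factor $q^{1/4}$.

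\emph{Removing the arbitrary coefficient and the TT$^*$ step.} First apply Cauchy--Schwarz in $\eta$ to remove $a$:
$$|\Lambda|^2\le \|a\|^2\sum_{u,u'}b_u\overline{b_{u'}}\sum_{\eta\ne0}\overline{c_{\eta+u}}\,c_{\eta+u'}\,e\!\left(\tfrac{u'^2-u^2}{4\eta}\right).$$
Next reparametrize by $w=\eta+u$, $w'=\eta+u'$, and apply Cauchy--Schwarz in the pair $(w,w')$ to remove $c$. What remains is a sum over $b$-coefficients of a \emph{complete} one-variable exponential sum in $\eta$ with phase
$$\eta\mapsto \frac{(w-\eta)^2-(w'-\eta)^2}{4\eta}$$
or a difference of two such phases. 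These are rational functions of bounded degree in $\eta$, so Weil's bound gives $O(q^{1/2})$ except on a degenerate locus where the rational function is constant, for example $u=\pm u'$ or coincident pairs.

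\emph{Counting and the main obstacle.} The diagonal contributes about $q\|b\|^4$ and the Weil terms about $q^{1/2}$ times a four-fold sum of $|b|$'s. The main obstacle is exactly this bookkeeping. If the $b$-sum is bounded crudely by $\ell^1$-type quantities (for instance $\sum_h(\sum_\eta|b_\eta b_{\eta+h}|)^2\le q\|b\|^4$), one only reaches $\delta=1/8$. To reach $1/4$ I would order the Cauchy--Schwarz steps so that the surviving free variables pair up with $\ell^2$ norms; in other words, apply the second Cauchy--Schwarz so that the diagonal has exactly one free summation. The Weil sum then has to save $q^{1/2}$ on a sum carrying only one extra factor of $q$. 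I would try first to exploit the explicit structure $u'^2-u^2=(u'-u)(u'+u)$: the change of variables $s=u'-u$, $t=u'+u$ makes the phase depend on $st/\eta$. For fixed $s$ the sum over $(t,\eta)$ is then a Kloosterman-type bilinear form, whose norm can be computed by a single TT$^*$ using only one-variable Kloosterman/Weil bounds. Finally I would check the degenerate loci ($s=0$, $t=0$) separately by direct counting.
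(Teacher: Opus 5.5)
Your reduction is correct up to and including the first Cauchy--Schwarz in $\eta$. That step is lossless (it is duality in $a$), so the theorem is equivalent to
\[
X:=\sum_{\eta\neq 0}\Big|\sum_u b_u\overline{c_{\eta+u}}\,e\Big(-\frac{u^2}{4\eta}\Big)\Big|^2\lesssim q^{1/2}\|b\|^2\|c\|^2 .
\]
The gap is in how you propose to prove this. The concrete second step (Cauchy--Schwarz in $(w,w')$) gives only $\delta=1/8$, as you note yourself, and your final paragraph does not repair it. With $u'=u+s$, $w=\eta+u$, $B_s(u)=b_u\overline{b_{u+s}}$ and $C_s(w)=\overline{c_w}c_{w+s}$, one has exactly
\[
X=\sum_{s}\sum_{u}\sum_{w\neq u}B_s(u)\,C_s(w)\,e\Big(\frac{s(2u+s)}{4(w-u)}\Big).
\]
For fixed $s$ the $c$-coefficient therefore depends on $w=\eta+(t-s)/2$, not on $\eta$ alone. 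So this is not a bilinear form in $(t,\eta)$ with kernel $e(st/4\eta)$, which is the form for which $TT^*$ would give exact orthogonality. In the true variables $(u,w)$, a $TT^*$ produces off-diagonal entries that are Kloosterman sums, of size $\asymp q^{1/2}$ for typical $(w,w')$. Bounding them entrywise by Weil and applying Schur's test gives slice norm $\lesssim q^{3/4}$, which is again $\delta=1/8$. Reaching $1/4$ requires each slice to have operator norm $\lesssim q^{1/2}$. That needs cancellation \emph{among} the kernel entries, and your proposal supplies no mechanism for it.

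The missing idea is the paper's key insight: the relevant kernel is a multiplicative convolution. In your ordering this is visible without any $TT^*$. Put $U=u+s/2$ and $W=w+s/2$; the phase becomes $sU/(2(W-U))$, which is homogeneous of degree $0$. Hence for $U,W\neq0$ the kernel equals $\kappa_s(U/W)$ with $\kappa_s(\mu)=e\big(\frac{s\mu}{2(1-\mu)}\big)$. The row $U=0$ and the column $W=0$ contribute only $O(q^{1/2}\|B_s\|\|C_s\|)$ by Cauchy--Schwarz. Multiplicative characters $\theta$ diagonalize the convolution, so for $s\neq 0$ its norm is
\[
\max_{\theta}\Big|\sum_{x\neq 0,-1}\theta\Big(\frac{x}{1+x}\Big)e\Big(\frac{sx}{2}\Big)\Big|\lesssim q^{1/2}
\]
after the substitution $x=\mu/(1-\mu)$, by the one-variable mixed Weil bound. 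The $s=0$ slice is at most $\|b\|^2\|c\|^2$. Cauchy--Schwarz in $s$, using $\sum_s\|B_s\|^2=\|b\|^4$ and $\sum_s\|C_s\|^2=\|c\|^4$, then gives $X\lesssim q^{1/2}\|b\|^2\|c\|^2$. The paper takes the other ordering. It expands $\|S\|_{\ell^2}^2$ (removing $c$ rather than $a$) and slices by the difference $h$. It then applies $TT^*$ in the variable where the phase is quadratic, so the resulting kernel is an explicit Gauss sum. Only after shifting by $h/2$ does it recognize the multiplicative convolution $L_h(Z/Y)$, and it needs Weil's bound for $\sum_r\theta(r)\chi(1-r^2)e\big(h\frac{r-1}{r+1}\big)$.
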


The proof of the main result follows from this using the following:

\begin{lemma}\label{lem:reduce}
Assume that Theorem \ref{thm:bilinear} holds with a factor of $q^{-\delta}$, where $0<\delta<\frac34$. Then Theorem \ref{thm:main} holds for sets of size $|A| \gtrsim q^{1-\frac{2}{3}\delta}$. In particular, Theorem \ref{thm:bilinear} implies Theorem \ref{thm:main}.
\end{lemma}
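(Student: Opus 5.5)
The plan is to count quadratic progressions in $A$ with the operator $\mathcal{A}$ and then use Theorem \ref{thm:bilinear} to show the count is dominated by its expected main term. Write $f=1_A$ and $\alpha=|A|/q$, so that $\mathbb{E}[f]=\alpha$ and $\|f\|_2=\alpha^{1/2}$ with the normalized norms. Set
$$
\Lambda:=\mathbb{E}_x f(x)\,\mathcal{A}(f,f)(x)=\frac{1}{q^2}\sum_{x,y\in F_q}1_A(x)1_A(x+y)1_A(x+y^2).
$$
Up to the factor $q^{-2}$, this counts triples $(x,x+y,x+y^2)$ lying in $A$, including the trivial ones with $y=0$.

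\emph{Step 1 (main term and error).} Split $\mathcal{A}(f,f)=\alpha^2+\bigl(\mathcal{A}(f,f)-\alpha^2\bigr)$. The first piece contributes $\mathbb{E}_x f(x)\alpha^2=\alpha^3$. For the second piece, apply Cauchy--Schwarz in the normalized inner product and then Theorem \ref{thm:bilinear} with $f_1=f_2=f$:
$$
\Bigl|\mathbb{E}_x f(x)\bigl(\mathcal{A}(f,f)(x)-\alpha^2\bigr)\Bigr|\le \|f\|_2\,\bigl\|\mathcal{A}(f,f)-\mathbb{E}[f]^2\bigr\|_2\lesssim q^{-\delta}\|f\|_2^3=q^{-\delta}\alpha^{3/2}.
$$
Hence $\Lambda\ge \alpha^3-Cq^{-\delta}\alpha^{3/2}$ for some absolute constant $C$.

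\emph{Step 2 (remove trivial progressions).} The terms with $y=0$ contribute exactly $q^{-2}\sum_x 1_A(x)=\alpha/q$ to $\Lambda$. So a nontrivial progression exists as soon as
$$
\alpha^3-Cq^{-\delta}\alpha^{3/2}>\alpha/q.
$$
It suffices to have both $Cq^{-\delta}\alpha^{3/2}\le \alpha^3/2$ and $\alpha/q<\alpha^3/2$.

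\emph{Step 3 (solve the two conditions).} The first condition is $\alpha^{3/2}\ge 2Cq^{-\delta}$, i.e.\ $\alpha\gtrsim q^{-2\delta/3}$, which is $|A|\gtrsim q^{1-\frac{2}{3}\delta}$. The second is $\alpha>\sqrt{2}\,q^{-1/2}$. Because $\delta<3/4$, we have $2\delta/3<1/2$, so $q^{-2\delta/3}\ge q^{-1/2}$. Thus, after enlarging the implied constant, the first condition implies the second. With $\delta=1/4$ from Theorem \ref{thm:bilinear}, the threshold becomes $|A|\gtrsim q^{1-1/6}=q^{5/6}$, which is Theorem \ref{thm:main}.

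There is no genuine obstacle here. The only points needing care are consistent normalizations ($\|1_A\|_2=\alpha^{1/2}$, not $|A|^{1/2}$) and checking that the hypothesis $\delta<3/4$ is exactly what makes the trivial-progression term negligible compared with the Weil-type error term.
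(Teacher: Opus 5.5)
Your proposal is correct and follows the paper's proof essentially line by line. You split $\mathbb{E}_x f\,\mathcal{A}(f,f)$ into the main term $\alpha^3$ plus a Cauchy--Schwarz/Theorem~\ref{thm:bilinear} error of size $q^{-\delta}\alpha^{3/2}$, subtract the $y=0$ contribution $\alpha/q$, and use $\delta<\frac{3}{4}$ to make that trivial term negligible, with your Step~3 simply spelling out the final inequality more explicitly than the paper does.
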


\begin{proof}
Let $|A|=\alpha q$ and let $f=1_A$. We then have
$$
\mathbb{E}_x\mathbb{E}_y[f(x)f(x+y)f(x+y^2)]
= \mathbb{E}_x [f(x) \mathcal{A}(f,f)(x)]
$$
$$
= \mathbb{E}_x [f(x) \mathbb{E}[f]^2] 
+  \mathbb{E}_x [f(x) \left( \mathcal{A}(f,f)(x) -\mathbb{E}[f]^2 \right) ] 
$$
$$
\geq \mathbb{E}[f]^3 
-\|f\|_{2} \cdot \|\mathcal{A}(f,f) - \mathbb{E}[f]^2 \|_{2}
$$
$$
\geq \alpha^{3} -  C q^{-\delta} \alpha^{\frac{3}{2}}.
$$
The contribution to this average from the value $y=0$ is $\alpha/q$, and therefore the average over $y\neq 0$ is positive provided $\alpha^3-Cq^{-\delta}\alpha^{3/2}-\alpha/q>0$. This holds, after increasing the implicit constant if necessary, whenever $\alpha \gtrsim q^{-\frac{2}{3}\delta}$, since $\delta<\frac34$. Therefore there exist $x\in F_q$ and $y\neq 0$ such that $x,x+y,x+y^2\in A$. This proves the claim.
\end{proof}

\section{Acknowledgments and Use of Artificial Intelligence}
I would like to thank Shukun Wu for pointing out a serious error in a much earlier manuscript, and Emmanuel Kowalski and Will Sawin for answering various questions about multivariate exponential sums related to this and other projects.

Several years ago, I found a cruder form of this argument in which the operators $T_h$ were estimated with successive applications of the $TT^{*}$ method. This was enough to recover the Kavrut--Wu exponent, while avoiding multivariate exponential sums and leading to a considerably simpler proof. I also noticed that the argument could be iterated arbitrarily many times, provided one could obtain square-root cancellation in a certain family of exponential sums in increasingly many variables. These sums were singular in the sense of Katz, and the needed estimates do not appear to follow directly from the existing literature. If carried through, this method had the potential to establish the main result for sets of size $c_\epsilon q^{5/6 +\epsilon}$, or slightly weaker than what we have obtained here. During a discussion with ChatGPT 5.5 about how one might control these sums, it pointed out that the kernel could be represented as a multiplicative convolution, which is a key insight used in the proof presented here.

\section{Reduction to a linear estimate}

We now turn to the proof of Theorem \ref{thm:bilinear}. The initial steps of the argument will follow \cite{DLS}. By Fourier inversion (see (2.1) in \cite{BC}), we have
$$
\mathcal{A}(f_1,f_2)(x) = \mathbb{E}_y [f_1(x+y)f_2(x+y^2)]
$$
\begin{equation}\label{eq:Afour}
=
\sum_{n_1,n_2 \in F_q}
\widehat{f}_1(n_1) \widehat{f}_2(n_2)
e((n_1+n_2)x) K(n_1,n_2),
\end{equation}
where
\begin{equation}\label{eq:Kdef}
K(a,b):= \mathbb{E}_y[e(ay+by^2)]
=
\left\{
\begin{array}{ll}
\sigma_F q^{-1/2}\chi(b)e\left(-\frac{a^2}{4b}\right)
\quad&\mbox{for}~~ b\neq 0, \\[0.5em]
1 \quad&\mbox{for}~~ a = 0 \text{ and } b = 0, \\[0.5em]
0 \quad&\mbox{for}~~ a\neq 0 \text{ and } b= 0.
\end{array}
\right.
\end{equation}
Here $\chi(\cdot)$ denotes the quadratic multiplicative character and $\sigma_F$ is a complex number such that $|\sigma_F|=1$. See Theorem 5.22 in \cite{LN}.

From \eqref{eq:Afour} we have
$$
\mathcal{A}(f_1,f_2)(x)
=
\mathbb{E}[f_1]\mathbb{E}[f_2]
+
\sum_{\substack{n_1,n_2 \in F_q \\ (n_1,n_2) \neq (0,0)}}
\widehat{f}_1(n_1) \widehat{f}_2(n_2)
e((n_1+n_2)x) K(n_1,n_2).
$$
Furthermore, if $n_2=0$ and $n_1\neq 0$, then $K(n_1,0)=0$. Hence
$$
\mathcal{A}(f_1,f_2)(x)
=
\mathbb{E}[f_1]\mathbb{E}[f_2]
+
\sum_{\substack{n_1,n_2 \in F_q \\ n_2 \neq 0}}
\widehat{f}_1(n_1) \widehat{f}_2(n_2)
e((n_1+n_2)x) K(n_1,n_2).
$$
Performing the change of variables $m=n_1+n_2$ and $n=n_2$, we obtain
$$
\mathcal{A}(f_1,f_2)(x)
=
\mathbb{E}[f_1]\mathbb{E}[f_2]
+
\sum_{m\in F_q}
\left(
\sum_{n\neq 0}
\widehat{f}_1(m-n)\widehat{f}_2(n)K(m-n,n)
\right)e(mx).
$$
From Parseval's identity, we have
$$
\|\mathcal{A}(f_1,f_2)-\mathbb{E}[f_1]\mathbb{E}[f_2]\|_2
=
\left(
\sum_{m\in F_q}
\left|
\sum_{n\neq 0}
\widehat{f}_1(m-n)\widehat{f}_2(n)K(m-n,n)
\right|^2
\right)^{1/2}.
$$
It thus suffices to show that
\begin{equation}\label{eq:reduction}
\sum_{m\in F_q}
\left|
\sum_{n\neq 0}
\widehat{f}_1(m-n)\widehat{f}_2(n)K(m-n,n)
\right|^2
\lesssim
q^{-\frac{1}{2}}
\|\widehat{f}_1\|_{\ell^2}^2
\|\widehat{f}_2\|_{\ell^2}^2.
\end{equation}

Expanding the square on the left side, we have
$$
\sum_{m\in F_q}
\sum_{\substack{n_1,n_2 \in F_q \\ n_1,n_2 \neq 0}}
\widehat{f}_1(m-n_1)
\overline{\widehat{f}_1(m-n_2)}
\widehat{f}_2(n_1)
\overline{\widehat{f}_2(n_2)}
K(m-n_1,n_1)
\overline{K(m-n_2,n_2)}.
$$
Using the change of variables $n_1=v$, $n_2=v+h$, and $m=u+v$, this quantity can be rewritten as
$$
\sum_{h\in F_q}
\sum_{\substack{u,v\in F_q \\ v\neq 0 \\ v+h\neq 0}}
\widehat{f}_1(u)
\overline{\widehat{f}_1(u-h)}
\widehat{f}_2(v)
\overline{\widehat{f}_2(v+h)}
K(u,v)
\overline{K(u-h,v+h)}.
$$
Defining
$$
F_h(u):=\widehat{f}_1(u)\overline{\widehat{f}_1(u-h)}
$$
and
$$
G_h(v):=\widehat{f}_2(v)\overline{\widehat{f}_2(v+h)},
$$
we can rewrite the above as
\begin{equation}\label{eq:FG}
\sum_{h\in F_q}
\sum_{\substack{u,v\in F_q \\ v\neq 0 \\ v+h\neq 0}}
F_h(u)G_h(v)K(u,v)\overline{K(u-h,v+h)}.
\end{equation}

We next consider the inner sum in the case when $h=0$. In this case we have
$$
K(u,v)\overline{K(u-h,v+h)}=|K(u,v)|^2=q^{-1}
$$
for $v\neq 0$. Thus the $h=0$ contribution is bounded by
$$
q^{-1}\|F_0\|_{\ell^1}\|G_0\|_{\ell^1}
\leq
q^{-1}
\|\widehat{f}_1\|_{\ell^2}^2
\|\widehat{f}_2\|_{\ell^2}^2.
$$

We now claim the following proposition, which we will prove later.

\begin{proposition}\label{prop:slice}
Let $G: F_q \rightarrow \mathbb{C}$ and let $h\neq0$. Define
$$
T_h(G)(u)
:=
\sum_{\substack{v \in F_q \\ v \neq 0 \\ v+h \neq 0}}
G(v)K(u,v)\overline{K(u-h,v+h)}.
$$
Then
$$
\|T_h(G)\|_{\ell^2}
\lesssim
q^{-\frac{1}{2}}\|G\|_{\ell^2}.
$$
\end{proposition}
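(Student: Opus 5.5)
The plan is to make the kernel of $T_h$ explicit, simplify its phase, and then prove the required square-root cancellation with a $T^*T$ argument. The aim is to reduce the off-diagonal part to a multiplicative convolution on $F_q^\times$, whose operator norm is controlled by one-variable Weil bounds.

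\textbf{Step 1: simplify the kernel.} For $v\neq 0$ and $v+h\neq 0$, \eqref{eq:Kdef} gives
$$K(u,v)\overline{K(u-h,v+h)}=q^{-1}\chi\bigl(v(v+h)\bigr)\,e\Bigl(-\tfrac{u^2}{4v}+\tfrac{(u-h)^2}{4(v+h)}\Bigr).$$
Completing the square in $u$, the phase becomes
$$-\frac{h(u+v)^2}{4\,v(v+h)}+\frac h4 .$$
This is the identity I would verify first. Writing $r(v)=v(v+h)$ and absorbing $\chi(r(v))$ and the constant phase $e(h/4)$ into $G$, it remains to bound the operator
$$SG(u)=q^{-1}\sum_{v}G(v)\,e\Bigl(-\frac{h(u+v)^2}{4r(v)}\Bigr)$$
by $q^{-1/2}$ on $\ell^2$.

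\textbf{Step 2: compute $S^*S$.} Its kernel is
$$q^{-2}\sum_u e\Bigl(\frac{h(u+v')^2}{4r(v')}-\frac{h(u+v)^2}{4r(v)}\Bigr).$$
This is a quadratic Gauss sum in $u$ with leading coefficient $\tfrac h4\bigl(r(v')^{-1}-r(v)^{-1}\bigr)$, and there are three cases.
\begin{itemize}[leftmargin=2em]
\item If $r(v)=r(v')$, then $v'=v$ or $v'=-h-v$. For $v'=v$ the sum equals $q$, so these terms give a diagonal operator of norm $q^{-1}$.
\item For $v'=-h-v$ with $v'\neq v$, the $u$-sum is linear with nonzero coefficient, hence vanishes.
\item Otherwise \eqref{eq:Kdef} evaluates the sum exactly. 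The $S^*S$ kernel becomes $q^{-3/2}\sigma\,\chi(\cdot)\,e(\Phi(v,v'))$, where $\chi(\cdot)$ is the quadratic character of the leading coefficient and $\Phi=C-B^2/(4A)$ is an explicit rational function of $v,v'$.
\end{itemize}

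\textbf{Step 3: the off-diagonal operator.} It suffices to show that the matrix $M(v,v')=\chi(\cdot)\,e(\Phi(v,v'))$ has $\ell^2$ operator norm $\lesssim q^{1/2}$. Then $\|S^*S\|\lesssim q^{-1}$, hence $\|S\|\lesssim q^{-1/2}$.

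The key point, following the hint in the acknowledgments, is to simplify $\Phi$ algebraically and find a change of variables $v\mapsto\alpha(v)$, $v'\mapsto\beta(v')$ (for instance rational maps built from $r$, $v/(v+h)$, or $1/v$) under which $M$ becomes a multiplicative convolution kernel $k(\alpha/\beta)$ on $F_q^\times$. These maps should be at most boundedly-many-to-one, so they change $\ell^2$ norms by $O(1)$. For such a kernel, diagonalizing with multiplicative characters $\psi$ gives
$$\|M\|\le\max_\psi\Bigl|\sum_{t\in F_q^\times}k(t)\psi(t)\Bigr|.$$
Each of these is a one-variable mixed character sum of the form $\sum_t\chi(R_1(t))\psi(t)\,e(R_2(t))$ with $R_1,R_2$ rational of bounded degree. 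The Weil bound gives $O(q^{1/2})$ provided $R_2$ is not degenerate, that is, not of the form $P^p-P+\text{const}$. Such degeneracy can be checked directly, and any exceptional $\psi$ or degenerate configuration only contributes lower-order terms.

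\textbf{Main obstacle.} I expect the hardest step to be Step 3: carrying out the algebra of $\Phi$ and exhibiting the multiplicative-convolution structure exactly. If no clean change of variables exists, my fallback is to apply $TT^*$ once more to $M$. That produces a two-variable sum, and I would then try to separate variables multiplicatively before invoking one-variable Weil bounds. Schur's test alone on Step 2 only gives $q^{-1/4}$, so genuine cancellation across $v'$ in the off-diagonal part is indispensable.
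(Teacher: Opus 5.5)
Your Steps 1 and 2 are correct and follow the paper's route: $T^*T$, exact Gauss-sum evaluation, a diagonal of size $q^{-1}$, and vanishing on $v'=-h-v$. The completed-square form of the phase is a tidy way to organize this.

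The gap is Step 3, which carries the entire content of the proposition. You never compute $\Phi$; you only hope that some substitution turns $M$ into a multiplicative convolution, and the candidates you name ($r$, $v/(v+h)$, $1/v$) are not the right ones. Writing $y=v$, $z=v'$, one finds $\Phi=\frac{h(z-y)}{4(y+z+h)}$ and leading coefficient $\frac{h(y-z)(y+z+h)}{4\,y(y+h)\,z(z+h)}$. The needed change of variables is the translation $Y=v+\frac h2$, $Z=v'+\frac h2$. This centers $r(v)=Y^2-c^2$ (with $c=\frac h2$), so that $r(v)-r(v')=Y^2-Z^2$ and $y+z+h=Y+Z$. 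Then for $Y,Z\neq0$ and $Z\neq\pm Y$,
\[
M(v,v')=\chi(h)\,\chi(Y^2-c^2)\,\chi(Z^2-c^2)\cdot\chi(1-t^2)\,e\Bigl(\frac h4\cdot\frac{t-1}{t+1}\Bigr),\qquad t=\frac ZY .
\]
Only after stripping off the one-variable unimodular factors $\chi(Y^2-c^2)$ and $\chi(Z^2-c^2)$ (diagonal multipliers) does the kernel take the form $k(Z/Y)$. The row and column $Y=0$ (i.e.\ $v=-h/2$) lie outside $F_q^\times$ and must be split off. Their entries have modulus at most $1$, so they cost only $O(q^{1/2})$.

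You also cannot dismiss exceptional characters as lower-order. Every multiplicative character is an eigenvector of a multiplicative convolution operator on $F_q^\times$, so its norm equals $\max_\psi\bigl|\sum_t k(t)\psi(t)\bigr|$, and a single bad $\psi$ would destroy the estimate. You need $\bigl|\sum_t k(t)\psi(t)\bigr|\lesssim q^{1/2}$ uniformly in $\psi$.

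With the kernel above this does hold. The substitution $s=(t-1)/(t+1)$ makes the additive phase the linear polynomial $\frac h4 s$ with $h\neq0$. It turns $\chi(1-t^2)\psi(t)$ into $\chi(-s)\,\psi(1+s)\,\overline{\psi}(1-s)$, whose only finite zeros and poles are $s=0,\pm1$ whatever $\psi$ is. A Weil bound depending only on the number of distinct zeros and poles (the Cochrane--Pinner form used in the paper) then gives the uniform bound, and no $\psi$ is exceptional.

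Your fallback of a second $TT^*$ does not rescue the argument. It leads to multi-variable sums that are singular in Katz's sense, and without new square-root-cancellation estimates for them it only gives weaker exponents; this is the cruder approach described in the paper's acknowledgments.

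As written, you have the right strategy but not a proof: the decisive identity is missing.
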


Returning to \eqref{eq:FG}, applying Cauchy--Schwarz and then Proposition \ref{prop:slice}, we may bound the contribution from $h\neq 0$ by
$$
\sum_{\substack{h\in F_q \\ h\neq 0}}
\|F_h\|_{\ell^2}\|T_h(G_h)\|_{\ell^2}
\lesssim
q^{-\frac{1}{2}}
\sum_{h\in F_q}
\|F_h\|_{\ell^2}\|G_h\|_{\ell^2}.
$$
By Cauchy--Schwarz in $h$,
$$
\sum_{h\in F_q}
\|F_h\|_{\ell^2}\|G_h\|_{\ell^2}
\leq
\left(\sum_h\|F_h\|_{\ell^2}^2\right)^{1/2}
\left(\sum_h\|G_h\|_{\ell^2}^2\right)^{1/2}.
$$
Since
$$
\sum_h\|F_h\|_{\ell^2}^2
=
\sum_{h,u}
|\widehat f_1(u)|^2|\widehat f_1(u-h)|^2
=
\|\widehat f_1\|_{\ell^2}^4,
$$
and similarly
$$
\sum_h\|G_h\|_{\ell^2}^2
=
\|\widehat f_2\|_{\ell^2}^4,
$$
we conclude that the quantity in \eqref{eq:FG} is bounded by
$$
q^{-\frac{1}{2}}
\|\widehat f_1\|_{\ell^2}^2
\|\widehat f_2\|_{\ell^2}^2.
$$
This proves \eqref{eq:reduction}, and hence Theorem \ref{thm:bilinear}, subject to Proposition \ref{prop:slice}.

\section{Proof of Proposition \ref{prop:slice}}

\begin{proof}
We now prove Proposition \ref{prop:slice}. Recall that
$$
T_h(G)(u)
=
\sum_{\substack{v \in F_q \\ v\neq 0 \\ v+h\neq 0}}
G(v)K(u,v)\overline{K(u-h,v+h)}.
$$
From \eqref{eq:Kdef}, for $v\neq 0,-h$, we have
$$
K(u,v)\overline{K(u-h,v+h)}
=
q^{-1}\chi(v)\chi(v+h)
e\left(
-\frac{u^2}{4v}
+
\frac{(u-h)^2}{4(v+h)}
\right).
$$
The factor $\chi(v)\chi(v+h)$ has modulus one and is independent of $u$, so it may be absorbed into $G(v)$. Therefore it is enough to prove
$$
\|\widetilde T(g)\|_{\ell^2}
\lesssim
q^{1/2}\|g\|_{\ell^2},
$$
where
$$
\widetilde T(g)(x)
=
\sum_{\substack{y\in F_q\\ y\neq0,\ y\neq-h}}
g(y)
e\left(
-\frac{x^2}{4y}
+
\frac{(x-h)^2}{4(y+h)}
\right).
$$
Since $4$ is invertible in $F_q$, we may replace the additive character by $e(\cdot/4)$ and suppress the factor of $4$. Thus it is enough to prove the same estimate for
$$
\widetilde T(g)(x)
=
\sum_{\substack{y\in F_q\\ y\neq0,\ y\neq-h}}
g(y)
e\left(
-\frac{x^2}{y}
+
\frac{(x-h)^2}{y+h}
\right).
$$

Squaring and summing in $x$, we obtain
$$
\|\widetilde T(g)\|_{\ell^2}^2
=
\sum_{\substack{y,z\in F_q\\ y,z\neq0,-h}}
g(y)\overline{g(z)}B_h(y,z),
$$
where
$$
B_h(y,z)
=
\sum_{x\in F_q}
e\left(
-\frac{x^2}{y}
+
\frac{(x-h)^2}{y+h}
+
\frac{x^2}{z}
-
\frac{(x-h)^2}{z+h}
\right).
$$
It is therefore enough to prove
$$
\left|
\sum_{\substack{y,z\in F_q\\ y,z\neq0,-h}}
g(y)\overline{g(z)}B_h(y,z)
\right|
\lesssim
q\|g\|_{\ell^2}^2.
$$

We first evaluate $B_h(y,z)$. The phase is a quadratic polynomial in $x$. The coefficient of $x^2$ is
$$
A=
\frac{h(y-z)(h+y+z)}{(h+y)(h+z)yz},
$$
the coefficient of $x$ is
$$
B=
\frac{2h(y-z)}{(h+y)(h+z)},
$$
and the constant term is
$$
C=
\frac{h^2(z-y)}{(h+y)(h+z)}.
$$
Thus, if $y\neq z$ and $h+y+z\neq0$, using the evaluation of a Gauss sum gives
$$
B_h(y,z)
=
\sigma_F q^{1/2}
\chi\left(
\frac{h(h+y+z)(y-z)}
{(h+y)(h+z)yz}
\right)
e\left(
\frac{h(z-y)}{h+y+z}
\right),
$$
where $|\sigma_F|=1$. If $y=z$, then $B_h(y,y)=q$. If $h+y+z=0$ and $y\neq z$, then the quadratic coefficient vanishes but the linear coefficient is nonzero, and hence $B_h(y,z)=0$.

Now put
$$
c=\frac h2,\qquad Y=y+c,\qquad Z=z+c.
$$
Then
$$
h+y+z=Y+Z,\qquad y-z=Y-Z,
$$
and
$$
(h+y)y=(Y+c)(Y-c)=Y^2-c^2.
$$
The restrictions $y\neq0,-h$ become $Y\neq c,-c$. In these variables, away from the two lines $Y=Z$ and $Y=-Z$, the formula above becomes
$$
B_h(Y-c,Z-c)
=
\sigma_F q^{1/2}
\chi\left(
\frac{h(Y^2-Z^2)}
{(Y^2-c^2)(Z^2-c^2)}
\right)
e\left(
\frac{h(Z-Y)}{Y+Z}
\right).
$$

We now remove the denominator inside the quadratic character. Define
$$
D(Y)=\chi(Y^2-c^2).
$$
For $Y\neq\pm c$, we have $D(Y)=\pm1$. Thus, replacing $f(Y)$ by $f(Y)D(Y)$ does not change $|f(Y)|$ or $\|f\|_{\ell^2}$. Therefore it is enough to bound the same quadratic form after multiplying the kernel by $D(Y)D(Z)$. Define
$$
B_{h,0}(Y,Z)
=
D(Y)B_h(Y-c,Z-c)D(Z).
$$
For $Y,Z\neq\pm c$, away from the two lines $Y=Z$ and $Y=-Z$, this is
$$
B_{h,0}(Y,Z)
=
\omega_h q^{1/2}
\chi(Y^2-Z^2)
e\left(
\frac{h(Z-Y)}{Y+Z}
\right),
$$
where $\omega_h=\sigma_F\chi(h)$ and $|\omega_h|=1$. Also
$$
B_{h,0}(Y,Y)=q,
$$
and, for $Y\neq0$,
$$
B_{h,0}(Y,-Y)=0.
$$

We next treat the terms with $Y,Z\neq0$. We exclude the values $Y=\pm c$ and extend the relevant functions by zero at these two points. Put
$$
r=\frac ZY.
$$
Then
$$
Y^2-Z^2=Y^2(1-r^2),
$$
so
$$
\chi(Y^2-Z^2)=\chi(1-r^2).
$$
Also
$$
\frac{Z-Y}{Y+Z}
=
\frac{r-1}{r+1}.
$$
Define
$$
L_h(r)
:=
\omega_h\chi(1-r^2)
e\left(
h\frac{r-1}{r+1}
\right)
$$
for $r\neq\pm1$, and set $L_h(1)=L_h(-1)=0$. Then, for $Y,Z\neq0$, we have
$$
B_{h,0}(Y,Z) = q\,1_{Y=Z} + q^{1/2}L_h(Z/Y).
$$

We will need a variant of Weil's mixed exponential sum estimate \cite{Weil}. We use the following formulation stated by Cochrane and Pinner in \cite{CP}.  In what follows it will be important that only the number of zeros and poles of the rational function in the multiplicative character appear in the bound and not, say, the degree.

\begin{theorem}\label{thm:mixed}
Let $\psi$ be a non-trivial additive character of $F_q$, let $\rho$ be a multiplicative character of $F_q^\times$, and let $P\in F_q[X]$ be a nonconstant polynomial of degree $d$ with $p\nmid d$. Let $G\in F_q(X)$, and let $S$ be the set of finite points at which $G$ has a zero or a pole. Put $\ell=|S|$. Then
$$
\left|
\sum_{x\in F_q\setminus S}
\rho(G(x))\psi(P(x))
\right|
\lesssim
(d+\ell)q^{1/2}.
$$
Here the implicit constant is absolute. In particular, it is independent of the orders of the zeros and poles of $G$.
\end{theorem}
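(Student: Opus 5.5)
The plan is to prove Theorem \ref{thm:mixed} by the standard cohomological (equivalently, $L$-function) argument. The key point is that the only invariant entering the bound is the Euler characteristic of a rank-one sheaf on the punctured line. That Euler characteristic sees the points of $S$ only through their number, not through the multiplicities of the zeros and poles of $G$.

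\textbf{Setup.} Throughout, $S$ is read as the set of zeros and poles of $G$ in $\overline{F_q}$. This set is Frobenius-stable, so $U=\mathbb{A}^1\setminus S$ is defined over $F_q$. Over $\overline{F_q}$, write
$$G(X)=c\prod_{\alpha\in S}(X-\alpha)^{e_\alpha}, \qquad e_\alpha\in\mathbb{Z}\setminus\{0\}.$$
On $U$, form the rank-one lisse $\overline{\mathbb{Q}}_\ell$-sheaf
$$\mathcal{F}=\mathcal{L}_{\psi(P)}\otimes\mathcal{L}_{\rho(G)},$$
the tensor product of the Artin--Schreier sheaf attached to $P$ and the Kummer sheaf attached to $G$. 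By construction, its Frobenius trace at $x\in U(F_q)=F_q\setminus S$ is $\rho(G(x))\psi(P(x))$.

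\textbf{Trace formula and cohomology.} The Grothendieck--Lefschetz trace formula expresses the sum in the statement as
$$\sum_{i}(-1)^i\operatorname{Tr}\bigl(\mathrm{Frob}\mid H^i_c(U_{\overline{F_q}},\mathcal{F})\bigr).$$
Since $U$ is affine, $H^0_c=0$. The group $H^2_c$ is the coinvariants of the geometric monodromy, and I claim it also vanishes. The reason is that $\mathcal{F}$ is wildly ramified at $\infty$: since $p\nmid d$, the Artin--Schreier factor has Swan conductor exactly $d\ge1$ at $\infty$, while the Kummer factor is tame there. So $\mathcal{F}$ is geometrically nontrivial and $H^2_c=0$. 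Hence only $H^1_c$ survives.

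\textbf{Dimension count.} By the Grothendieck--Ogg--Shafarevich formula,
$$\dim H^1_c=-\chi_c(U,\mathcal{F})=-\Bigl(\chi_c(U)-\sum_{x\notin U}\mathrm{Sw}_x(\mathcal{F})\Bigr).$$
Here $\chi_c(U)=1-\ell$. At each $\alpha\in S$ the sheaf $\mathcal{F}$ is tame: the Artin--Schreier part is unramified there, and the Kummer part is locally $\mathcal{L}_{\rho^{e_\alpha}}(X-\alpha)$, whose Swan conductor is $0$ whatever $e_\alpha$ is. At $\infty$ the Swan conductor is $d$. Therefore
$$\dim H^1_c=\ell-1+d.$$
This is exactly where independence from the orders $e_\alpha$ enters. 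If $\rho^{e_\alpha}$ happens to be trivial, the dimension can only drop, since the point $\alpha$ still lies outside $U$, so the bound is unaffected.

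\textbf{Weights and conclusion.} Deligne's theorem (Weil II), or Weil's Riemann hypothesis for curves applied to the associated Artin--Schreier--Kummer cover, shows that $H^1_c$ is mixed of weights $\le1$. Every Frobenius eigenvalue therefore has absolute value at most $q^{1/2}$, which gives
$$\Bigl|\sum_{x\in F_q\setminus S}\rho(G(x))\psi(P(x))\Bigr|\le(d+\ell-1)q^{1/2}.$$
The implicit constant can thus be taken to be $1$.

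\textbf{Main obstacle.} I expect the delicate step to be the local analysis: checking that $H^2_c$ vanishes, and that every point of $S$ contributes exactly $1$ to the Euler characteristic, even when the factors of $G$ are irreducible of higher degree over $F_q$ or the exponents are divisible by the order of $\rho$.

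If one wants to avoid $\ell$-adic language, the fallback is Weil's elementary route. One shows that the $L$-function
$$L(T)=\exp\Bigl(\sum_n S_n T^n/n\Bigr)$$
of the family of sums over $F_{q^n}$ is a polynomial of degree $d+\ell-1$, by factoring it via characters of the ray class group with conductor supported on $S\cup\{\infty\}$. One then uses the Riemann hypothesis for the corresponding cover to place its reciprocal roots on the circle $|T|=q^{-1/2}$.
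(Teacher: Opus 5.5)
Your argument is correct, but it does not follow the paper's route: the paper gives no proof of Theorem \ref{thm:mixed} at all and simply quotes Weil's bound in the formulation of Cochrane and Pinner \cite{CP}. You instead derive it from first principles. You use the Grothendieck--Lefschetz trace formula, the vanishing of $H^0_c$ (by affineness) and of $H^2_c$ (since $\mathrm{Sw}_\infty(\mathcal{F})=d\ge 1$ forces $\mathcal{F}$ to be geometrically nontrivial), the Grothendieck--Ogg--Shafarevich formula giving $\dim H^1_c=d+\ell-1$, and a weight bound. This buys a transparent explanation of the exact feature the paper relies on in Lemma \ref{lem:specific-mixed}. The points of $S$ are tame, so they enter only through $\chi_c(U)=1-\ell$ and never through the exponents $e_\alpha$. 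It also yields the explicit classical constant $d+\ell-1$.

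You are right to read $S$ over $\overline{F_q}$. If $S$ recorded only the $F_q$-rational zeros and poles, then for $G$ without rational roots the bound would be independent of $\deg G$, and the statement would be false in general. In the paper's application $S\subseteq\{-1,0,1\}\subset F_q$, so the distinction is harmless there.

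One small correction: when $\rho^{e_\alpha}$ is trivial, $\dim H^1_c$ does not drop, since Grothendieck--Ogg--Shafarevich still gives exactly $d+\ell-1$. Rather, $\mathcal{F}$ extends across $\alpha$, and $H^1_c(U,\mathcal{F})$ picks up a weight-$0$ piece coming from the stalk at $\alpha$. Since you only use weights $\le 1$, your conclusion is unaffected.

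Finally, the paper stresses that only one-variable estimates are needed. In that spirit, your second justification of the weights (Weil's Riemann hypothesis for the Artin--Schreier--Kummer curve) is the more economical one; invoking Weil II is more than this step requires.
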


We will use the following consequence.

\begin{lemma}\label{lem:specific-mixed}
Let $\eta$ be a multiplicative character of $F_q^\times$, let $e(\cdot)$ be a nontrivial additive character, and let $\lambda\neq 0$. Then
$$
\left|
\sum_{r\neq 0,\pm1}
\eta(r)\chi(1-r^2)
e\!\left(\lambda\frac{r-1}{r+1}\right)
\right|
\lesssim q^{1/2}.
$$
Consequently,
$$
\left|
\sum_{r\in F_q^\times}L_h(r)\eta(r)
\right|
\lesssim q^{1/2}.
$$
\end{lemma}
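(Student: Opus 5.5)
The plan is to change variables so that the rational phase $\lambda\frac{r-1}{r+1}$ becomes linear. All the multiplicative characters then combine into a single $\rho(G(t))$ with only three zeros and poles, and Theorem \ref{thm:mixed} applies with $d=1$ and $\ell=3$.

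\textbf{Step 1: Möbius substitution.} Set $t=\frac{r-1}{r+1}$, so that $r=\frac{1+t}{1-t}$. This map is a bijection from $F_q\setminus\{-1\}$ onto $F_q\setminus\{1\}$. Under it, $r=1$ corresponds to $t=0$, $r=0$ corresponds to $t=-1$, and $r=-1$ corresponds to the point at infinity, which is not in $F_q$. Hence the range $r\neq 0,\pm1$ corresponds exactly to $t\in F_q\setminus\{0,1,-1\}$. A direct computation gives
$$1-r^2=\frac{(1-t)^2-(1+t)^2}{(1-t)^2}=\frac{-4t}{(1-t)^2},$$
so $\chi(1-r^2)=\chi(-t)$, because $\chi$ is quadratic and $\chi(4)=\chi\big((1-t)^{-2}\big)=1$. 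Moreover $\eta(r)=\eta(1+t)\,\overline{\eta}(1-t)$. The sum therefore becomes
$$\chi(-1)\sum_{t\neq 0,\pm1}\eta(1+t)\,\overline{\eta}(1-t)\,\chi(t)\,e(\lambda t).$$

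\textbf{Step 2: combine the characters into one $\rho(G)$.} The group $\widehat{F_q^\times}$ is cyclic, so fix a generator $\omega$ and write $\eta=\omega^a$ and $\chi=\omega^{(q-1)/2}$. Then, for $t\neq 0,\pm1$,
$$\eta(1+t)\,\overline{\eta}(1-t)\,\chi(t)=\omega(G(t)),\qquad G(t)=(1+t)^a(1-t)^{-a}t^{(q-1)/2}\in F_q(t).$$
The only finite zeros and poles of $G$ lie in $S=\{0,1,-1\}$, so $\ell\le 3$, even though the orders $a$ and $(q-1)/2$ may be huge. This is precisely why it matters that Theorem \ref{thm:mixed} depends only on the number of zeros and poles and not on the degree. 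Two degenerate cases are harmless:
\begin{itemize}
\item If some exponents vanish, then $S$ only shrinks, and we simply sum over $t\notin\{0,\pm1\}$ by discarding at most three extra terms of modulus $1$.
\item If $\rho$ ends up trivial, this is still allowed in Theorem \ref{thm:mixed}.
\end{itemize}

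\textbf{Step 3: apply Theorem \ref{thm:mixed}.} Take $\psi=e$, $\rho=\omega$, and $P(t)=\lambda t$. This $P$ is nonconstant of degree $d=1$ because $\lambda\neq0$, and $p\nmid 1$. The theorem gives a bound $\lesssim (1+3)q^{1/2}\lesssim q^{1/2}$, with an absolute implied constant, which proves the first claim.

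\textbf{Step 4: the consequence.} The definition sets $L_h(\pm1)=0$, so
$$\sum_{r\in F_q^\times}L_h(r)\eta(r)=\omega_h\sum_{r\neq0,\pm1}\eta(r)\,\chi(1-r^2)\,e\!\left(h\tfrac{r-1}{r+1}\right).$$
This is the first sum with $\lambda=h\neq0$ (recall $h\neq0$ in Proposition \ref{prop:slice}), multiplied by the unimodular factor $\omega_h$.

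\textbf{Main obstacle.} The only delicate point is the bookkeeping in Step 1: checking that the excluded set matches exactly and that the factor $(1-t)^{-2}$ disappears under $\chi$. Everything else is a direct invocation of Theorem \ref{thm:mixed}.
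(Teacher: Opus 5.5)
Your proposal is correct and follows essentially the same route as the paper: the same Möbius substitution $t=\frac{r-1}{r+1}$, the same identity $1-r^2=\frac{-4t}{(1-t)^2}$, merging the characters into a single $\rho(G(t))$ whose finite zeros and poles lie in $\{0,\pm1\}$, and Theorem \ref{thm:mixed} applied with $d=1$ and $\ell\le 3$. The differences are only cosmetic: you take a generator of the full character group rather than of the subgroup generated by $\eta$ and $\chi$, and you simplify $\chi(1-r^2)=\chi(-t)$ before combining the characters.
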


\begin{proof}
We first reduce the two multiplicative characters to one character. The character group of $F_q^\times$ is cyclic, so we may choose a character $\rho$ which generates the subgroup generated by $\eta$ and $\chi$. Thus, for some integers $a,b$,
$$
\eta=\rho^a,
\qquad
\chi=\rho^b.
$$
This includes the case where both characters are trivial.

Now make the fractional linear change of variables
$$
s=\frac{r-1}{r+1},
\qquad
r=\frac{1+s}{1-s}.
$$
The excluded values $r=0,\pm1$ correspond to deleting only the finite values $s=-1,0,1$. Moreover
$$
1-r^2
=
1-\left(\frac{1+s}{1-s}\right)^2
=
\frac{-4s}{(1-s)^2}.
$$
Therefore, for the remaining values of $s$,
$$
\eta(r)\chi(1-r^2)
=
\rho\!\left(
C s^b(s+1)^a(s-1)^{-a-2b}
\right)
$$
for some constant $C\in F_q^\times$. Thus the sum is of the form
$$
\sum_{s\neq -1,0,1}
\rho(G(s))e(\lambda s),
$$
where
$$
G(s)=C s^b(s+1)^a(s-1)^{-a-2b}.
$$

The exponents $a,b$ may depend on $\eta$ and $\chi$, but this does not affect the constant. The only possible finite zeros and poles of $G$ are $-1, 0,1$.

The powers with which these factors occur may change, but the set of possible factors does not.

The additive phase is the linear polynomial $P(s)=\lambda s$. Since $\lambda\neq0$, it has degree $1$, and $p\nmid 1$. Hence Theorem \ref{thm:mixed} applies with $d=1$ and $\ell\leq3$. If one of the three points $-1,0,1$ is not actually a zero or pole of $G$, deleting it changes the sum by only $O(1)$. We conclude that
$$
\left|
\sum_{r\neq 0,\pm1}
\eta(r)\chi(1-r^2)
e\!\left(\lambda\frac{r-1}{r+1}\right)
\right|
\lesssim q^{1/2}.
$$
Taking $\lambda=h$ and absorbing the unimodular factor $\omega_h$ gives the stated bound for $L_h$.
\end{proof}

Recalling that $B_{h,0}(Y,Z) = q\,1_{Y=Z} + q^{1/2}L_h(Z/Y)$ our main task will be, for $f:F_q^\times\rightarrow \mathbb{C}$, to estimate
$$
\sum_{Y,Z\in F_q^\times}
f(Y)\overline{f(Z)}L_h(Z/Y).
$$
The key insight is that this is a multiplicative convolution. For a multiplicative character $\eta$ of $F_q^\times$, we define the multiplicative Fourier coefficients  
$$
M_f(\eta)=\sum_{Y\in F_q^\times}f(Y)\overline{\eta(Y)}.
$$
This allows us to represent
$$
f(Y)=\frac1{q-1}\sum_{\eta}M_f(\eta)\eta(Y).
$$
From orthogonality we have the following Parseval-type identity 
\begin{equation}\label{eq:mulP}
\frac1{q-1}\sum_{\eta}|M_f(\eta)|^2
=
\sum_{Y\in F_q^\times}|f(Y)|^2.
\end{equation}
Writing $Z=rY$, we have
$$
\sum_{Y,Z\in F_q^\times} f(Y)\overline{f(Z)}L_h(Z/Y)
= \sum_{r\in F_q^\times}L_h(r) \sum_{Y\in F_q^\times}f(Y)\overline{f(rY)}.
$$

Using the character expansion,
$$
\sum_{Y\in F_q^\times}f(Y)\overline{f(rY)}
=
\frac1{q-1}
\sum_{\eta}|M_f(\eta)|^2\overline{\eta(r)}.
$$
Therefore
$$
\sum_{Y,Z\in F_q^\times}
f(Y)\overline{f(Z)}L_h(Z/Y)
=
\frac1{q-1}
\sum_{\eta}
|M_f(\eta)|^2
\sum_{r\in F_q^\times}L_h(r)\overline{\eta(r)}.
$$
By Lemma \ref{lem:specific-mixed} and \eqref{eq:mulP} this is bounded in absolute value by
$$
q^{1/2}
\frac1{q-1}
\sum_{\eta}|M_f(\eta)|^2
=
q^{1/2}
\sum_{Y\in F_q^\times}|f(Y)|^2.
$$
Consequently,
$$
\left|
\sum_{Y,Z\in F_q^\times}
f(Y)\overline{f(Z)}B_{h,0}(Y,Z)
\right|
\lesssim
q\sum_{Y\in F_q^\times}|f(Y)|^2.
$$

It remains to include the terms with $Y=0$ or $Z=0$, corresponding to $y=-h/2$ or $z=-h/2$. The term $Y=Z=0$ is bounded by $q|f(0)|^2$. If exactly one of $Y,Z$ is zero, then the Gauss sum evaluation above gives
$$
|B_{h,0}(Y,Z)|\lesssim q^{1/2}.
$$
Thus these terms are bounded by
$$
q|f(0)|^2
+
q^{1/2}|f(0)|\sum_{Y\neq0}|f(Y)|.
$$
By Cauchy--Schwarz,
$$
q^{1/2}|f(0)|\sum_{Y\neq0}|f(Y)|
\leq
q|f(0)|\left(\sum_{Y\neq0}|f(Y)|^2\right)^{1/2}
\leq
q\sum_Y |f(Y)|^2.
$$
Therefore the terms with $Y=0$ or $Z=0$ contribute
$$
\lesssim q\sum_Y |f(Y)|^2.
$$
Combining this with the bound for $Y,Z\neq0$, we obtain
$$
\left|
\sum_{\substack{Y,Z\in F_q\\ Y,Z\neq\pm c}}
f(Y)\overline{f(Z)}B_{h,0}(Y,Z)
\right|
\lesssim
q\|f\|_{\ell^2}^2.
$$
Undoing the multiplication by $D(Y)$, and then undoing the change of variables $Y=y+h/2$, gives
$$
\left|
\sum_{\substack{y,z\in F_q\\ y,z\neq0,-h}}
g(y)\overline{g(z)}B_h(y,z)
\right|
\lesssim
q\|g\|_{\ell^2}^2.
$$
This implies $\|\widetilde T(g)\|_{\ell^2}
\lesssim
q^{1/2}\|g\|_{\ell^2}.
$
Returning to $T_h$, which includes a factor of $q^{-1}$, we obtain
$$
\|T_h(G)\|_{\ell^2}
\lesssim
q^{-1/2}\|G\|_{\ell^2}.
$$
This proves Proposition \ref{prop:slice}.
\end{proof}

\section{Lower bounds and examples}
We close by recording some lower bound constructions. These examples suggest
that the exponent $\frac{2}{3}$ is a natural barrier for this problem.

In their paper, Bourgain and Chang observed a lower bound for the bilinear operator
estimate appearing above. In the notation of this paper, their example shows that one
cannot obtain an estimate of the form
$$
\|\mathcal{A}(f_1,f_2)-\mathbb{E}[f_1]\mathbb{E}[f_2]\|_2
    \lesssim q^{-\delta}\|f_1\|_2\|f_2\|_2
$$
with $\delta=\frac12$ or larger. By the reduction in Lemma
\ref{lem:reduce}, the exponent $\delta=\frac12$ would correspond to
the threshold
$$
        |A| \gtrsim q^{1-\frac23\cdot\frac12}
        =
        q^{2/3}.
$$
However, the Bourgain--Chang example is function-theoretic. It gives an
obstruction to the operator estimate, but it does not appear to give a
combinatorial construction of a set $A\subset F_q$ of size about $q^{2/3}$
with no configurations of the form $(x,x+y,x+y^2)$. We give such a construction for certain finite fields below.

First, let us note that a simple greedy argument gives a lower bound of
$\gtrsim q^{1/2}$ in every finite field. Suppose that $A$ contains no
nontrivial configurations of the form $(x,x+y,x+y^2)$. We ask how many new
elements are forbidden from being added to $A$.

If adding a new element creates a configuration, then that new element must
occupy one of the three positions $(x,x+y,x+y^2)$ while the other entries are already in $A$. Once two entries are fixed, there are only $O(1)$ possible values for the remaining entry. Indeed, the parameter $y$ is then determined by either a linear equation or a quadratic equation.
Thus each pair of elements of $A$ forbids at most $O(1)$ new elements. Thus one can proceed with a greedy construction until the set has size
$$
        |A| \gtrsim q^{1/2}.
$$

For quadratic extensions $K=\mathbb F_{q^2}$, an explicit construction can be given. Let
$K/F$ be a quadratic extension, with $|F|=q$ and $q$ odd. Choose
$\omega\in K$ with $\omega^2\in F$ and $\omega\notin F$, and consider the
$F$-line
$$
        L=\{\omega a:a\in F\}\subset K.
$$
Then $|L|=q=|K|^{1/2}$. If $0\neq y=\omega a\in L$, then
$$
        y^2=\omega^2 a^2\in F^\times.
$$
But $L\cap F=\{0\}$, so $y^2\notin L$. Hence if $\{x, x+y, x+y^2\}$ all lie in $L$, then subtracting $x$ gives $y,y^2\in L$ which forces $y=0$. Thus $L$ contains no nontrivial configurations.

The same idea can be improved in cubic extensions. Instead of taking a line in
a quadratic extension, we look for a plane in a cubic extension with the
property that no nonzero element of the plane has its square again in the
plane.

\begin{theorem}\label{thm:cubiclb}
Let $F=\mathbb F_q$ be a finite field of odd order, and let $K/F$ be a cubic
extension. Then there exists a two-dimensional $F$-linear subspace
$V\subset K$ such that if $y\in V,\quad y^2\in V$ then $y=0$. Consequently, $V$ has size $|V|=q^2=|K|^{2/3}$, and $V$ contains no nontrivial configurations of the form $(x, x+y, x+y^2)$.
\end{theorem}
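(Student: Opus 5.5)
The plan is to count: I will show that the number of ``bad'' planes is strictly smaller than the total number $q^2+q+1$ of two-dimensional $F$-subspaces of $K\cong F^3$. Call a plane $W$ \emph{bad} if some nonzero $y\in W$ has $y^2\in W$. Once a good plane $V$ exists, the progression statement follows exactly as in the quadratic example. If $x,x+y,x+y^2\in V$, then subtracting $x$ gives $y,y^2\in V$, so $y=0$.

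\emph{Step 1: two kinds of bad planes.} If $y\in F^\times$, then $y^2\in Fy$. So every plane containing $1$ is bad, and there are exactly $q+1$ such planes. If $y\notin F$, then $y$ and $y^2$ are $F$-independent, since $y^2=cy$ would force $y=c\in F$. Hence exactly one plane, $\mathrm{span}(y,y^2)$, witnesses $y$. No such plane contains $1$: otherwise $1,y,y^2$ would be dependent and $y$ would have degree $\le 2$ over $F$, which is impossible in a cubic extension. So the bad planes are the $q+1$ planes through $1$, together with the planes not containing $1$ that contain some line $Fy$ with $y\notin F$ and $y^2\in W$. There are $(q^3-1)/(q-1)-1=q^2+q$ lines not equal to $F$.

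\emph{Step 2: each bad plane contains one or two such lines.} Fix a plane $W\not\ni 1$ and write $W=\ker\varphi$ for a nonzero linear functional $\varphi:K\to F$. Then $Q(y)=\varphi(y^2)$ restricted to $W$ is a binary quadratic form, and the lines $Fy\subset W$ with $y^2\in W$ are exactly its isotropic lines. First, $Q\not\equiv 0$ on $W$. If it vanished identically, polarization in odd characteristic would give $yz\in W$ for all $y,z\in W$. For any fixed nonzero $y\in W$ this means $yW=W$, so $yw=y$ for some $w\in W$, whence $w=1\in W$, a contradiction. Consequently each bad plane not through $1$ contains either exactly one isotropic line ($Q$ degenerate, nonzero) or exactly two ($Q$ nondegenerate and isotropic). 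Let $N_1$ and $N_2$ be the numbers of these two kinds of planes. Since every line not equal to $F$ lies in exactly one bad plane, we get $N_1+2N_2=q^2+q$, and the number of bad planes is
\[
(q+1)+N_1+N_2 \;=\; (q+1)+q^2+q-N_2 .
\]
This is less than $q^2+q+1$ exactly when $N_2>q+1$, equivalently when $N_1<q^2-q-2$.

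\emph{Step 3 (main obstacle): bounding $N_1$.} I would write $\varphi(x)=\mathrm{Tr}_{K/F}(\alpha x)$ with $\alpha\in K^\times$. In coordinates, the discriminant of $Q|_{\ker\varphi}$ is a homogeneous polynomial $\Delta(\alpha)$ of bounded degree in the three coordinates of $\alpha$, namely a bordered determinant of degree $3$. The planes with $Q$ degenerate correspond to projective zeros of $\Delta$. I must check that $\Delta\not\equiv 0$. This should follow from Step 2: $\Delta\equiv 0$ would mean every plane not containing $1$ has degenerate $Q$, so each would contain exactly one isotropic line, giving $N_1=q^2$ planes. But only $q^2+q$ lines are available, and nearly all of these $q^2$ planes would need their own line. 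A cleaner route is to check a single explicit $\alpha$ directly. Granting $\Delta\not\equiv 0$, a plane curve of degree $3$ has at most $3(q+1)$ points over $F$. Hence $N_1\le 3q+3<q^2-q-2$ for $q\ge 7$. The finitely many small odd $q$ (here $3$ and $5$) would be checked by hand or simply excluded, consistent with the paper's standing assumption that $q$ is large. Pinning down the degree bound and the nonvanishing of $\Delta$ rigorously is where I expect the real work to lie.
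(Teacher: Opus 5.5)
Your Steps 1 and 2 are correct, and the identity $N_1+2N_2=q^2+q$ is a clean way to organize the count. Step 3, however, does not work as written, and it is unnecessary anyway.

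\textbf{Problems in Step 3.}
The bordered determinant is a quartic in $\alpha$, not a cubic. Each term of the $4\times4$ bordered determinant uses two border entries and two Gram entries, and all of these are linear in $\alpha$. Splitting $K\otimes_F\overline F\cong\overline F^{3}$, where $\mathrm{Tr}(\alpha y^2)=\sum\alpha_iy_i^2$ and $\mathrm{Tr}(\alpha y)=\sum\alpha_iy_i$, one finds $\Delta(\alpha)=c\,N_{K/F}(\alpha)\,\mathrm{Tr}_{K/F}(\alpha)$ with $c\neq0$.

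Your argument for $\Delta\not\equiv0$ is also misphrased. Having $q^2$ planes against $q^2+q$ available lines is no contradiction. The real contradiction is that $N_1=q^2$, $N_2=0$ violates the exact identity $N_1+2N_2=q^2+q$.

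The correct threshold is $N_2>q$, i.e.\ $N_1<q^2-q$, rather than $N_2>q+1$. With a generic quartic bound $N_1\le 4(q+1)$ this only handles $q\ge7$. That leaves $q=3,5$ open, whereas the statement, and the paper's proof, cover every odd $q$.

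\textbf{The missing observation.}
The heart of the paper's proof is that $N_1=0$. Let $W=\mathrm{span}(y,y^2)=\ker\varphi$ with $1\notin W$, and write $y^3=Ay^2+By+C$ with $C\neq0$. Then $\varphi(y^3)=C\varphi(1)$ and $\varphi(y^4)=AC\varphi(1)$, so
\[
Q(ay+by^2)=C\varphi(1)\,b\,(2a+Ab),\qquad C\varphi(1)\neq0 .
\]
In odd characteristic the two linear factors are independent. Hence $Q|_W$ is always split and nondegenerate, and its second isotropic line is spanned by $z=y^2-\tfrac A2 y$.

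The paper phrases exactly this as $z\notin Fy$ and $z^2=(B+\tfrac{A^2}{4})y^2+Cy\in W$. It concludes that each bad plane avoiding $1$ arises from at least $2(q-1)$ of the $q^3-q$ elements $y\in K\setminus F$. So there are at most $q(q+1)/2<q^2$ such planes.

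In your bookkeeping this reads $N_2=(q^2+q)/2$, giving exactly $q(q-1)/2$ good planes for every odd $q$. No discriminant or curve point count is needed. The factorization $\Delta=c\,N_{K/F}\,\mathrm{Tr}_{K/F}$ shows the same thing, since $N_{K/F}$ has no nonzero $F$-rational zeros.
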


\begin{proof}
We proceed by a counting argument.  Throughout the proof, a plane means a two-dimensional
$F$-linear subspace of the three-dimensional $F$-space $K$.

First, the total number of planes in $K$ is
$$
        \frac{(q^3-1)(q^3-q)}{(q^2-1)(q^2-q)}
        =
        q^2+q+1.
$$
Indeed, the numerator counts ordered pairs of linearly independent vectors in
$K$, and the denominator counts the number of ordered bases given rise to a fixed plane.

Now count the planes containing $1$. Such a plane is spanned by $1$ and some
element of $K\setminus F$. There are $q^3-q$ choices for this second element.
For a fixed plane containing $1$, exactly $q^2-q$ of its elements lie outside
$F$. Hence the number of planes containing $1$ is
$$ \frac{q^3-q}{q^2-q}   = q+1.$$
It follows that the number of planes $V\subset K$ with $ 1\notin V$ is $(q^2+q+1)-(q+1)=q^2$.

Call a plane $V$ bad if $1\notin V$ and there is some nonzero $y\in V$ such that $y^2\in V$. We will show that not all of the planes can be bad.

Next notice that if $1 \notin V$ then since $V$ is $F$-linear we cannot have $y \in V$ for $y \in F^\times$ since that would imply $1=y^{-1}y\in V,$ contradicting that $1\notin V$. Thus any candidate for $y$ and $y^2$ being in $V$ must satisfy $y \in K\setminus F$. Fix $y\in K\setminus F$. Since $K/F$ has degree $3$, the elements $\{1,y,y^2\}$ are linearly independent over $F$. Therefore $\{y,y^2\}$ must span $V$ and every bad plane arises in this way. Indeed, if $V$ is bad, then for some nonzero $y\in V$ we have $y^2\in V$. We now show that each bad plane is produced by many such $y$. Indeed if $\{y,y^2\}$ span $V$ then $\{ay,a^2y^2\}$ also span $V$ for all $a \in F^\times$, or for $q-1$ scalar multiples.

Next we notice that since $K$ is a cubic extension, we may write $y^3 =Ay^2+By+C$ for $A,B,C \in F$ and $C\neq 0$. If $y$ and $y^2$ are in $V$ then $z:= y^2 - \frac A2 y \in V$. Now we use that the characteristic is odd here. Further notice that $z$ is not an $F$-multiple of $y$ since that would imply that $y^2$ is an $F$-multiple of $y$ forcing $y\in F$. However notice that $z^2 = y^4 -Ay^3 +A^2/4 y^2.$ Using $y^3=Ay^2+By+C$ we have that $y^4 = y(Ay^2+By+C) = Ay^3+By^2+Cy$ which implies $z^2  =    \left(B+\frac{A^2}{4}\right)y^2+Cy$. So $z^2$ is in the span of $\{y, y^2\}$ and thus also in $V$. Thus if we consider the association of $y \in K \setminus F$ to bad planes, there are at least $2(q-1)$ choices that produce the same plane, while there are $q^3 -q$ choices of $y \in K \setminus F$. So the number of bad planes is at most $\frac{q^3-q}{2(q-1)}  =  \frac{q(q+1)}2$ while we have previously established the number of $F$-linear planes not containing $1$ is $q^2$. Therefore a plane with the desired properties must exist. The fact that this set does not contain a nontrivial configuration of the form $(x,x+y,x+y^2)$ follows as before.
\end{proof}

\bibliographystyle{amsplain}

\end{document}